\newtheorem*{acknowledgement}{Acknowledgements}
\newtheorem{theorem}{Theorem}[section]
\newtheorem{proposition}[theorem]{Proposition}
\newtheorem{remark}[theorem]{Remark}
\newtheorem{corollary}[theorem]{Corollary}
\newcommand{\ga}{\gamma}
\newcommand{\e}{\varepsilon}
\newcommand{\wh}{\widehat}
\newcommand{\ZR}{\mathbb{R}}
\newcommand{\R}{\mathbb{R}}
\newcommand{\ZZ}{\mathbb{Z}}
\newcommand{\eit}{e^{i t \Delta}}
\newcommand{\norm}[1]{ \| #1 \|}
\begin{document}

\title[Lower bounds for the Schr\"odinger maximal function]{Lower bounds for estimates of the Schr\"odinger maximal function}

\author[X. Du]{Xiumin Du}
\address{
University of Maryland\\
College Park, MD}
\email{xdu@math.umd.edu}

\author[J. Kim]{Jongchon Kim}
\address{
University of British Columbia\\
Vancouver, BC}
\email{jkim@math.ubc.ca}

\author[H. Wang]{Hong Wang}
\address{
Massachusetts Institute of Technology\\
Cambridge, MA}
\email{hongwang@mit.edu} 

\author[R. Zhang]{Ruixiang Zhang}
\address{
University of Wisconsin-Madison\\
Madison, WI}
\email{ruixiang@math.wisc.edu}

\begin{abstract}
We give new lower bounds for $L^p$ estimates of the Schr\"odinger maximal function by generalizing an example of Bourgain. 
\end{abstract}

\maketitle

\section{Introduction}
\setcounter{equation}0

\noindent Let
$$
  e^{it\Delta}f(x)=(2\pi)^{-n/2}\int e^{i\left(x\cdot\xi+t|\xi|^2\right)}\widehat{f}(\xi) \, d\xi
$$ denote the solution to the free Schr\"{o}dinger equation
\begin{equation*}
  \begin{cases}
    iu_t - \Delta u = 0, &(x,t)\in \mathbb{R}^n \times \mathbb{R} \\
    u(x,0)=f(x), & x \in \mathbb{R}^n.
  \end{cases}
\end{equation*}
We are interested in the value of $\bar \ga_{n,p}$, the infimum of the numbers $\ga_{n,p}$ such that the following Schr\"{o}dinger maximal estimate holds:
\begin{equation}\label{Lp-R}
\left\| \sup_{0 < t \le R} | e^{it \Delta} f| \right\|_{L^p(B^n(0,R))}  \lessapprox R^{\gamma_{n,p}} \| f \|_{L^2}\,, \quad \forall f: {\rm supp}\wh f \subset B^n(0,1)\,.
\end{equation}
Here $A\lessapprox B$ denotes $A\leq C_\e R^\e B$ for some constant $C_\epsilon>0$ for any $\e>0,R>1$. We also write $A \gtrsim B$ if $A\geq CB$ for an absolute constant $C>0$.

Estimates of the form \eqref{Lp-R}, especially the case $p=2$, have applications to Carleson’s pointwise convergence problem for Schr\"{o}dinger solutions \cite{lC} and have been studied extensively by many authors. The state-of-art results are summarized as follows. Due to examples by Dahlberg--Kenig \cite[$n=1$]{DK} and Bourgain \cite[$n\geq 2$]{jB16}, and positive results by Kenig--Ponce--Vega \cite[$n=1$]{KPV}, D.--Guth--Li \cite[$n=2$]{DGL} and D.--Z. \cite[$n\geq 3$]{DZ}, it is known that 
\begin{equation} \label{eq-ga}
  \bar \ga _{n,p}=\max\left\{n\left(\frac 1 p -\frac{n}{2(n+1)}\right),0\right\}  
\end{equation}
for any $p\geq 1$ when $n=1,2$, and $1\leq p\leq 2$ when $n\geq 3$. Also, from the Stein-Tomas Fourier restriction theorem it follows that $\bar \ga_{n,p}=0$ for $p\geq \frac{2(n+2)}{n}$. However, it remains as an interesting problem to determine $\bar\ga_{n,p}$ for $2<p<\frac{2(n+2)}{n}$ when $n\geq 3$. 

It may seem plausible that \eqref{eq-ga} should hold for any $p\geq 1$ and $n\geq 1$. However, we disprove this for a certain range of $p$ when $n\geq 3$.
Our main result is the following lower bound for $\bar\ga_{n,p}$.
\begin{theorem} \label{thm-ga} Let $n\geq 3$ and $p\geq 2$. For every integer $1\leq  m \leq n$,
\[ \bar\ga_{n,p}\geq \frac{n+m}{2}\left(\frac 1 p-\frac 1 2 \right)+\frac{m}{2(m+1)}.\]
\end{theorem}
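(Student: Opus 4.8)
The plan is to prove the lower bound by exhibiting, for each admissible $m$, an explicit $f$ (depending on $R$) with $\mathrm{supp}\,\widehat f\subset B^n(0,1)$ for which the left side of \eqref{Lp-R} is $\gtrsim R^{\gamma_0}\|f\|_{L^2}$, where $\gamma_0=\frac{n+m}{2}(\frac1p-\frac12)+\frac{m}{2(m+1)}$; since every admissible exponent in \eqref{Lp-R} must then satisfy $\gamma\ge\gamma_0-\varepsilon$ for all $\varepsilon$, this forces $\bar\gamma_{n,p}\ge\gamma_0$. Splitting variables as $\mathbb{R}^n=\mathbb{R}^m_{x'}\times\mathbb{R}^{n-m}_{x''}$ and using that $e^{it\Delta}$ factorizes over this splitting, I would take a tensor product $f=g\otimes h$, where $g$ on $\mathbb{R}^m$ is Bourgain's example (after a harmless rescaling so that $\mathrm{supp}\,\widehat g\subset B^m(0,1/2)$) and $h$ on $\mathbb{R}^{n-m}$ is a single low-frequency wave packet chosen so that $e^{it\Delta}h$ is essentially constant in both space and time on a large region. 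The whole point is that the $h$-factor enlarges the set on which the maximal function is large, at a controlled cost in $\|f\|_{L^2}$, thereby converting the diagonal ($m=n$) bound in dimension $m$ into the stated bound in dimension $n$.

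For the flat factor I would set $\widehat h=\mathbf 1_{B^{n-m}(0,R^{-1/2})}$ (suitably smoothed). Since $e^{it\Delta}h(x'')=c\int_{|\xi''|\le R^{-1/2}}e^{i(x''\cdot\xi''+t|\xi''|^2)}\,d\xi''$ and both the linear and the quadratic phases vary by $O(1)$ over the support as soon as $|x''|\lesssim R^{1/2}$ and $t\lesssim R$, one obtains $|e^{it\Delta}h(x'')|\sim R^{-(n-m)/2}$ for all $x''\in B^{n-m}(0,R^{1/2})$ and all $0<t\le R$ \emph{simultaneously}, while $\|h\|_{L^2}\sim R^{-(n-m)/4}$. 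The essential feature is that this lower bound is uniform in $t$: the $h$-factor never becomes small on the relevant time interval, so it does not compete with the supremum over $t$ coming from the $g$-factor.

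With this in hand I would combine the two factors using the elementary observation that if $b(t)\ge c_h$ for all $t\in(0,R]$ then $\sup_t a(t)b(t)\ge c_h\sup_t a(t)$. Applying this with $a(t)=|e^{it\Delta}g(x')|$ and $b(t)=|e^{it\Delta}h(x'')|$, restricting $x'\in B^m(0,R/2)$ and $x''\in B^{n-m}(0,R^{1/2})$ (a subset of $B^n(0,R)$), and integrating gives
\[ \Big\|\sup_{0<t\le R}|e^{it\Delta}f|\Big\|_{L^p(B^n(0,R))}\gtrsim R^{-\frac{n-m}{2}}\,\big|B^{n-m}(0,R^{1/2})\big|^{1/p}\,\Big\|\sup_{0<t\le R}|e^{it\Delta}g|\Big\|_{L^p(B^m(0,R))}. \]
Inserting $|B^{n-m}(0,R^{1/2})|^{1/p}=R^{\frac{n-m}{2p}}$, dividing by $\|f\|_{L^2}=\|g\|_{L^2}R^{-\frac{n-m}{4}}$, and bookkeeping the powers of $R$ (one checks $\frac{n-m}{2}(\frac1p-\frac12)+m(\frac1p-\frac{m}{2(m+1)})=\gamma_0$) reduces the whole statement to the diagonal estimate in dimension $m$, namely $\bar\gamma_{m,p}\ge m(\frac1p-\frac{m}{2(m+1)})$, which is precisely the $m=n$ case of the theorem applied in $\mathbb{R}^m$. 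That diagonal $L^p$ bound is exactly what Bourgain's example supplies: its refocusing at a family of rational times $t(x')\le R$ yields $g$ with $\|g\|_{L^2}\sim1$ and $|e^{it(x')\Delta}g(x')|\sim R^{-m^2/(2(m+1))}$ on a positive proportion of $B^m(0,R)$, so that $\|\sup_t|e^{it\Delta}g|\|_{L^p(B^m(0,R))}\gtrsim R^{m(\frac1p-\frac{m}{2(m+1)})}$ for every $p$.

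The one substantive ingredient is this base case, and I expect it, rather than the tensorization, to be the main point requiring care: to obtain the bound for \emph{all} $p\ge2$ (not merely $p=2$) one needs Bourgain's refocusing example in $\mathbb{R}^m$ to attain its amplitude on a set of measure $\sim R^m$ — equivalently, that the refocusing at the relevant rational times tiles $B^m(0,R)$ — since it is this space-filling property that upgrades the $L^2$ bound to the full range of $p$. The tensor step itself is soft; the only remaining subtleties are the smoothing of the indicator frequency cut-offs, the harmless rescaling ensuring $\mathrm{supp}\,\widehat f\subset B^n(0,1)$, the $R^\varepsilon$ losses implicit in $\lessapprox$, and checking that the time $t(x')$ selected for the $g$-factor lies in $(0,R]$ so that the uniform-in-$t$ lower bound for the $h$-factor indeed applies.
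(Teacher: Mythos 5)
Your proposal is correct, and its skeleton coincides with the paper's: tensor Bourgain's $m$-dimensional refocusing example with a single wave packet in the remaining $n-m$ variables, exploit the factorization of $e^{it\Delta}$, and rescale. The genuine difference is the choice of the transverse factor. You take $\widehat h=\mathbf 1_{B^{n-m}(0,R^{-1/2})}$, a packet at frequency $0$, so that $|e^{it\Delta}h|\sim R^{-(n-m)/2}$ on all of $B^{n-m}(0,R^{1/2})$ \emph{uniformly} in $0<t\le R$; the maximal function is then large on the product set $E_0\times B^{n-m}(0,R^{1/2})$ of measure $\sim R^{(n+m)/2}$, and no interaction between the two factors' time dependence needs to be checked. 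The paper instead modulates the transverse packet to frequency $\sim R$ (in its normalization, $\widehat{f_1}(\xi')=\prod_j R^{-1/4}\chi(R^{-1/2}(\xi_j-R))$), so that the packet moves with speed $\sim R$ and is large only on the thin diagonal slab $|x_j-x_1|\le\tfrac12 R^{-1/2}$ at the refocusing time $t\approx -x_1/(2R)$ of Bourgain's example; the resulting set has the small measure $R^{-(n-m)/2}$ in $B^n(0,1)$, but the amplitude is boosted by $R^{(n-m)/4}$. The two constructions are related by a Galilean boost and produce identical values of $A\mu^{1/p}$ for every $p$, as your bookkeeping confirms, so either yields Theorem \ref{thm-ga}; yours trivializes the synchronization of the supremum in $t$, while the paper's keeps $\widehat f$ supported in the single annulus $|\xi|\sim R$ as asserted in Proposition \ref{prop} (a feature your $f$ lacks, though it is immaterial for the theorem, which only needs $\mathrm{supp}\,\widehat f\subset B^n(0,1)$). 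Finally, the "one substantive ingredient" you isolate --- that Bourgain's example attains amplitude $R^{m/(2(m+1))}\|f_0\|_{L^2}$ on a subset of $B^m(0,1)$ of measure $\sim 1$, i.e.\ on a positive proportion of $B^m(0,R)$ after rescaling --- is precisely what the paper quotes from Bourgain as \eqref{eq:BG}, so that step is indeed available as a black box and your reduction to it is sound.
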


The example that proves Theorem \ref{thm-ga} is built upon Bourgain's example \cite{jB16} that provides the lower bound for the case $m=n$. For the case $1\leq m< n$, we take Bourgain's example in the intermediate dimension $m$ and then ``fatten" it to a function on $\R^n$. 

We state two special cases of Theorem \ref{thm-ga} as a corollary.
\begin{corollary}
If $\bar\ga_{n,p}=n(\frac 1 p-\frac{n}{2(n+1)})$, then $$p\leq p_0(n):=2+\frac{4}{(n-1)(n+2)}.$$
If $\bar\ga_{n,p}=0$, then $$p\geq p_1(n):= \max _{m\in \ZZ, 1\leq m\leq n} 2+\frac{4}{n-1+m+n/m}.$$
\end{corollary}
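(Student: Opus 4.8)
The plan is to obtain both assertions directly from Theorem \ref{thm-ga} by elementary algebra, with no additional harmonic analysis. Abbreviate the lower bound furnished by the theorem as
\[
  L(m) := \frac{n+m}{2}\left(\frac 1 p - \frac 1 2\right) + \frac{m}{2(m+1)},
\]
so that $\bar\ga_{n,p} \geq L(m)$ for every integer $1 \leq m \leq n$. The one observation that links the two formulas is that the conjectured exponent $n(\frac 1 p - \frac{n}{2(n+1)})$ coincides exactly with $L(n)$, the endpoint case $m = n$.

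For the first assertion I would argue through the comparison of $L(n)$ with $L(n-1)$. If $\bar\ga_{n,p} = L(n)$, then since the theorem also gives $\bar\ga_{n,p} \geq L(n-1)$, we are forced to have $L(n) \geq L(n-1)$. The key computation is the difference, in which the second terms telescope to a clean factor:
\[
  L(n) - L(n-1) = \frac 1 2\left(\frac 1 p - \frac 1 2\right) + \frac{1}{2n(n+1)}.
\]
Imposing $L(n) - L(n-1) \geq 0$ and solving for $p$ gives $p \leq \frac{2n(n+1)}{(n-1)(n+2)}$, and a final simplification identifies the right-hand side with $p_0(n) = 2 + \frac{4}{(n-1)(n+2)}$. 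Equivalently, in contrapositive form, if $p > p_0(n)$ then $L(n-1) > L(n)$, so $\bar\ga_{n,p} \geq L(n-1) > L(n)$ and the conjectured value cannot be attained.

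For the second assertion, the hypothesis $\bar\ga_{n,p} = 0$ together with $\bar\ga_{n,p} \geq L(m)$ forces $L(m) \leq 0$ for every admissible $m$. Using $p \geq 2$ (so that $\frac 1 p - \frac 1 2 \leq 0$ and the rearrangements are sign-safe), solving $L(m) \leq 0$ for $p$ yields
\[
  p \geq \frac{2(m+1)(n+m)}{m^2 + mn - m + n} = 2 + \frac{4}{\,n - 1 + m + n/m\,}.
\]
Since this must hold for each $m$ with $1 \leq m \leq n$, taking the maximum over such $m$ gives precisely $p \geq p_1(n)$.

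There is no substantive obstacle here; the entire content is arithmetic. The steps requiring the most care are the two algebraic identifications—that $\frac{2n(n+1)}{(n-1)(n+2)} = p_0(n)$ and that $\frac{2(m+1)(n+m)}{m^2+mn-m+n} = 2 + \frac{4}{n-1+m+n/m}$—where a slip would be easy to miss. It is also worth recording, for sharpness, that among all comparisons $L(n) \geq L(m)$ with $m < n$ the choice $m = n-1$ is the binding one: writing $L(n) - L(m) = \frac{n-m}{2}\big[(\frac 1 p - \frac 1 2) + \frac{1}{(n+1)(m+1)}\big]$, the constraint on $p$ tightens as $(n+1)(m+1)$ increases, so $m = n-1$ produces the smallest threshold, namely $p_0(n)$.
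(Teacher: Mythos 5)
Your proof is correct and is exactly the intended derivation: the paper states the corollary without proof, and the content is precisely the algebra you carry out — identifying the conjectured exponent with $L(n)$, noting that $m=n-1$ gives the binding comparison for the first claim, and solving $L(m)\le 0$ for the second. Both algebraic identifications ($\frac{2n(n+1)}{(n-1)(n+2)}=p_0(n)$ and $\frac{2(m+1)(n+m)}{m^2+mn-m+n}=2+\frac{4}{n-1+m+n/m}$) check out.
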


\begin{remark}
Note that $p_0(n)<\frac{2(n+1)}{n}<p_1(n)$ when $n\geq 3$. Therefore, \eqref{eq-ga} fails for $p_0(n)<p<p_1(n)$ when $n\geq 3$.
\end{remark}

Finally, we remark that some upper bounds for $\bar\ga_{n,p}$ can be obtained from weighted Fourier restriction estimates, c.f. \cite{DZ}. In particular, we refer the reader to \cite{DGOWWZ} for such estimates with $p=2(n+1)/n$, which was obtained via the polynomial partitioning method \cite{G1,G2} and refined Strichartz estimates \cite{DGL, DGLZ}. 
For $p>2(n+1)/n$, one can get new upper bounds by using an additional ingredient, the fractal $L^2$ restriction estimate \cite{DZ}. However, it seems that new ingredients are still needed to get sharp results. We do not explore along this direction in the current paper.

\begin{acknowledgement} 
This work was initiated at the AMS 2018 Mathematics Research Communities (MRC)  program ``Harmonic Analysis: New Developments on Oscillatory Integrals". We wish to thank the organizers for the fruitful program.

This material is based upon work supported by the National Science Foundation under Grant Number DMS 1641020. The first, second and fourth authors were supported in part by the National Science Foundation under Grant Number DMS 1638352. They were additionally supported by the Shiing-Shen Chern Fund, a PIMS postdoctoral fellowship and the James D. Wolfensohn Fund, respectively.
\end{acknowledgement}

\section{An example that proves Theorem \ref{thm-ga}}
\setcounter{equation}0

Theorem \ref{thm-ga} is a consequence of the following.
\begin{proposition} \label{prop}
Let $m, n$ be integers with $1\leq m \leq  n$. For any $R>1$, there exists $f \in L^2 (\ZR^n)$ with $\wh f$ supported in the annulus $\{ \xi \in \ZR^n: |\xi| \sim R \}$ satisfying the following property; There is a set $E\subset B^n(0,1)$ of measure comparable to $R^{-\frac{n-m}{2}}$ such that for every $x\in E$, 
\[
\frac{ |e^{it\Delta}f(x)|} { \norm{f}_{L^2}} \gtrsim R^{\frac{m}{2(m+1)}} R^{\frac{n-m}{4}}\;\;\text{ for some } \; t = -\frac{x_1}{2R} + O(R^{-3/2}).
\]
\end{proposition}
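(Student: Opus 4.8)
The plan is to reduce to the diagonal case $m=n$, which is Bourgain's example, and then to ``fatten'' it in the remaining $n-m$ variables. First observe that the $m=n$ instance of the Proposition is exactly (a rescaled form of) Bourgain's construction \cite{jB16}: there is $g\in L^2(\R^m)$ with $\wh g$ supported in $\{|\eta|\sim R\}\subset\R^m$ and a set $E_0\subset B^m(0,1)$ with $|E_0|\sim 1$ such that $|e^{it\Delta_m}g(y)|\gtrsim R^{\frac{m}{2(m+1)}}\|g\|_{L^2}$ for every $y\in E_0$, at some $t=-\frac{y_1}{2R}+O(R^{-3/2})$. I would take this as the input. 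Its proof rests on choosing $\wh g$ to be a sum of unit bumps over a suitable lattice in $\{|\eta|\sim R\}$ and on evaluating the resulting (Gauss-type) exponential sums $\sum_{\eta}e^{i(y\cdot\eta+t|\eta|^2)}$ near rational space--time points; establishing that the lower bound holds on a set of full measure $\sim 1$ is the arithmetic heart of the matter, and this is the step I expect to be the main obstacle (it is precisely what is outsourced to \cite{jB16}).

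To pass from dimension $m$ to dimension $n$, write $x=(x',x'')$ with $x'\in\R^m$ and $x''\in\R^{n-m}$, and set $\wh f(\xi)=\wh g(\xi')\,\wh h(\xi'')$, where $\wh h=\mathbf 1_{B^{n-m}(0,cR^{1/2})}$ for a small absolute constant $c$. Since $|\xi'|\sim R$ and $|\xi''|\le cR^{1/2}$ force $|\xi|^2=|\xi'|^2+|\xi''|^2\sim R^2$, the function $f$ has Fourier support in the annulus $\{|\xi|\sim R\}$ as required, and by the tensor structure $\|f\|_{L^2}=\|g\|_{L^2}\|h\|_{L^2}$ with $\|h\|_{L^2}=\|\wh h\|_{L^2}\sim R^{(n-m)/4}$.

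The decisive identity is the factorization $e^{it\Delta_n}f(x)=e^{it\Delta_m}g(x')\cdot e^{it\Delta_{n-m}}h(x'')$. On the first factor I apply Bourgain's bound for $x'\in E_0$ at $t=-\frac{x_1}{2R}+O(R^{-3/2})$ (note $x_1$ is the first coordinate of both $x'$ and $x$). For the second factor I restrict to $|x''|\le cR^{-1/2}$: then for every $\xi''$ in the support the phase $x''\cdot\xi''+t|\xi''|^2$ is $\le \tfrac1{10}$ once $c$ is fixed small, using $|t|\lesssim R^{-1}$, so there is no cancellation and $|e^{it\Delta_{n-m}}h(x'')|\gtrsim \mathrm{vol}\big(B^{n-m}(0,cR^{1/2})\big)\sim R^{(n-m)/2}$. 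Taking $E=E_0\times B^{n-m}(0,cR^{-1/2})$, which lies in $B^n(0,1)$ and has measure $\sim R^{-(n-m)/2}$, and multiplying the two estimates gives $|e^{it\Delta_n}f(x)|\gtrsim R^{\frac{m}{2(m+1)}}\|g\|_{L^2}\cdot R^{(n-m)/2}$ for $x\in E$; dividing by $\|f\|_{L^2}=\|g\|_{L^2}R^{(n-m)/4}$ produces the claimed bound $R^{\frac{m}{2(m+1)}}R^{\frac{n-m}{4}}$.

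In this scheme the fattening is essentially routine: the only verifications are that the frequency support stays in $\{|\xi|\sim R\}$ (which dictates the radius $R^{1/2}$ of $\wh h$), that the $h$-factor suffers no decay over the window $|x''|\lesssim R^{-1/2}$ and the time range $|t|\lesssim R^{-1}$, and the bookkeeping of the measure and the exponent. The genuine difficulty is entirely in the $m=n$ base case, i.e.\ in the arithmetic analysis underlying Bourgain's example.
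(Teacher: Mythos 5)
Your proof is correct and follows essentially the same route as the paper: tensor Bourgain's $m$-dimensional example with an auxiliary bump of frequency width $R^{1/2}$ (hence spatial width $R^{-1/2}$) in the remaining $n-m$ variables, check that the separated phase produces no cancellation in the auxiliary factor, and multiply the two lower bounds. The only (immaterial) difference is that you center the auxiliary frequency support at the origin, so your set is $E_0\times B^{n-m}(0,cR^{-1/2})$, whereas the paper centers it at $(R,\dots,R)$ and accordingly uses the window $|x_j-x_1|\le \tfrac12 R^{-1/2}$ to track the moving wave packet; both choices keep $\wh f$ supported in $\{|\xi|\sim R\}$ and yield the same measure and the same exponent.
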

\begin{proof}
We write $\bar x=(x,x')\in \R^m \times \R^{n-m}$ and $\bar \xi =(\xi, \xi')\in \R^m \times \R^{n-m}$. 

We briefly recall an estimate for the example $f_0\in L^2(\ZR^m)$ from \cite{jB16}, where $\widehat{f_0}$ is supported in the annulus $\{ \xi \in \ZR^m: |\xi| \sim R \}$; There is a set $E_0 \subset B^m(0,1)$ of measure comparable to 1 such that for every $x\in E_0$, 
\begin{equation}\label{eq:BG}
\frac{ |e^{it\Delta}f_0(x)|} { \norm{f_0}_{L^2}} \gtrsim R^{\frac{m}{2(m+1)}} \;\;\text{for some} \; t = -\frac{x_1}{2R} + \tau \text{ with } \;  |\tau| \leq \frac{1}{10}R^{-3/2}.
\end{equation}
See also \cite{LR} for a different example based on \cite{BBCRV}, which provides an estimate essentially the same as \eqref{eq:BG}. 

Let $\chi=\chi_{[-\frac 1 2,\frac 1 2]}$ be the characteristic function of the interval $[-\frac 1 2,\frac 1 2]$. Let $f_1(x')$ be given by 
\[ \widehat{f_1} (\xi') = \prod_{j=m+1}^n R^{-\frac{1}{4} }\chi \left(R^{-\frac 1 2} (\xi_j - R) \right), \]
so that $\norm{f_1}_{L^2(\R^{n-m})} = 1$. The choice of the function $f_1$ is motivated by the example from \cite{jB16}. Note that 
\[ |\eit f_1 (x')| = (2\pi)^{-(n-m)/2} \prod_{j=m+1}^n R^{\frac{1}{4} } \left| \int_{[-\frac 1 2,\frac 1 2]} e^{i( R^{1/2} \xi_j(x_j+2Rt) + tR\xi_j^2)} d\xi_j \right|.  \]  
When $|t+\frac{x_1}{2R}| \leq  \frac{1}{2}R^{-3/2}$ and $|x_j - x_1| \leq \frac{1}{2}R^{-1/2}$ for each $m< j\leq n$, there is little cancellation in the above integral and therefore
\begin{equation}\label{eq:f1}
|\eit f_1 (x')| \gtrsim R^{\frac{n-m}{4}}.
\end{equation} 

We take $f$ to be the tensor product of $f_0$ and $f_1$, i.e., \[ f(\bar x)  := f_0(x) f_1 (x').\] 
Let $E$ be the set given by
\[ E = \{ (x,x') \in B^n(0,1) : x \in E_0 \text{ and } \max_{m< j\leq n} |x_j- x_1| \leq \frac{1}{2}R^{-1/2} \}. \]
It follows that the measure of the set $E$ is comparable to $R^{-\frac{n-m}{2}}$. Moreover, for any $\bar x = (x,x') \in E$, we have by \eqref{eq:BG} and \eqref{eq:f1}, 
\[ \frac{|\eit f(\bar x)|}{\norm{f}_{L^2}} = \frac{|\eit f_0( x)|}{\norm{f_0}_{L^2}} \frac{|\eit f_1( x')|}{\norm{f_1}_{L^2}} \gtrsim R^{\frac{m}{2(m+1)}} R^{\frac{n-m}{4}} \] 
for some $t$ satisfying $|t+\frac{x_1}{2R}| \leq \frac{1}{10} R^{-3/2}$.
\end{proof}

We proceed to the proof of Theorem \ref{thm-ga}. It follows from Proposition \ref{prop} that, 
\begin{equation}\label{eq:Lp}
\bigg\|\sup_{0<t\leq \frac 1 R}\left|\eit f\right|\bigg\|_{L^p(B^n(0,1))}  \gtrsim R^{\frac{m-n}{2} (\frac{1}{p}-\frac{1}{2}) } R^{\frac{m}{2(m+1)}} \|f\|_2.
\end{equation}
Theorem \ref{thm-ga} follows from \eqref{eq:Lp} by scaling. Define the function $g\in L^2(\ZR^n)$ by
\begin{equation*}
\wh g (\xi) = R^{\frac n 2}  \wh f (R\xi)
\end{equation*}
so that $\widehat{g}$ is supported in the annulus $|\xi| \sim 1$ and $\norm{g}_{L^2} = \norm{f}_{L^2}$. By parabolic rescaling, we have
\begin{equation*}
|\eit f(x)| = R^{\frac n 2} |e^{i R^2 t \Delta} g(R x)|.
\end{equation*}
Hence, by \eqref{eq:Lp}, 
\begin{align*}
    \bigg\| \sup_{0<t\leq R} |e^{i t \Delta} g| \bigg\|_{L^p(B^n(0,R))} 
    &= R^{n(\frac 1 p - \frac 1 2)} \bigg\| \sup_{0<t\leq \frac 1 R } |e^{i t \Delta} f| \bigg\|_{L^p(B^n(0,1))} \\
    &\gtrsim R^{\frac{n+m}{2} (\frac{1}{p}-\frac{1}{2}) } R^{\frac{m}{2(m+1)}} \|g\|_2.
\end{align*}
This finishes the proof of Theorem \ref{thm-ga}.


\begin{thebibliography}{9}
\bibitem{BBCRV}
J.A. Barcel\'{o}, J.M. Bennett, A. Carbery, A. Ruiz and M.C. Vilela, 
\emph{Some special solutions of the {S}chr\"{o}dinger equation},
Indiana Univ. Math. J. \textbf{56} (2007), no. 4, 1581--1593.

\bibitem{jB16}
J. Bourgain,
\emph{A note on the {S}chr\"{o}dinger maximal function},
J. Anal. Math. \textbf{130} (2016), 393--396. 

\bibitem{lC}
L. Carleson,
\emph{Some analytic problems related to statistical mechanics},
Euclidean Harmonic Analysis (Proc. Sem., Univ. Maryland, College Park, Md, 1979),
Lecture Notes in Math. \textbf{779}, pp. 5--45.

\bibitem{DK}
B.E.J. Dahlberg and C.E. Kenig,
\emph{A note on the almost everywhere behavior of solutions to the {S}chr\"{o}dinger equation},
Harmonic Analysis (Minneapolis, Minn, 1981), Lecture Notes in Math. \textbf{908}, pp.205--209.

\bibitem{DGL} X. Du, L. Guth and X. Li, \emph{A sharp {S}chr\"{o}dinger maximal estimate in $\ZR^2$},
Ann. of Math. (2) \textbf{186} (2017), no. 2, 607--640.

\bibitem{DGLZ} X. Du, L. Guth, X. Li and R. Zhang, \emph{Pointwise convergence of {S}chr\"{o}dinger solutions and multilinear refined Strichartz estimate},
Forum Math. Sigma 6 (2018), e14, 18 pp.
Published online: doi:10.1017/fms.2018.11

\bibitem{DGOWWZ} X. Du, L. Guth, Y. Ou, H. Wang, B. Wilson and R. Zhang,
\emph{Weighted restriction estimates and application to Falconer distance set problem},
Amer. J. Math. (2018, to appear)

\bibitem{DZ} X. Du and R. Zhang,
\emph{Sharp $L^2$ estimate of {S}chr\"{o}dinger maximal function in higher dimensions},
preprint (2018), arXiv:1805.02775

\bibitem{G1} L. Guth,
\emph{A restriction estimate using polynomial partitioning},
J. Amer. Math. Soc. \textbf{29}(2) (2016), 371--413.

\bibitem{G2}
L. Guth,
\emph{Restriction estimates using polynomial partitioning {II}},
Acta Math., \textbf{221} (2018), 81--142

\bibitem{KPV}
C.E. Kenig, G. Ponce and L. Vega,
\emph{Oscillatory integrals and regularity of dispersive equations},
Indiana Univ. Math. J. \textbf{40} (1991), no. 1, 33--69. 

\bibitem{LR}
R. Luc\`a and K.M. Rogers,
\emph{A note on pointwise convergence for the {S}chr\"{o}inger equation},
arXiv:1703.01360, Math. Proc. Camb. Phil. Soc., to appear.



\end{thebibliography}
\end{document}